\documentclass[11pt]{article}
\usepackage[margin=3cm]{geometry}
\usepackage{amsmath,amssymb,amsthm,mathtools}
\usepackage{microtype}
\usepackage{booktabs}
\usepackage{enumitem}
\usepackage{hyperref}
\usepackage{mathrsfs}
\usepackage{xcolor}
\hypersetup{colorlinks=true,linkcolor=blue!45!black,citecolor=blue!45!black,urlcolor=blue!45!black}
\allowdisplaybreaks

\newtheorem{theorem}{Theorem}[section]
\newtheorem{proposition}[theorem]{Proposition}
\newtheorem{lemma}[theorem]{Lemma}
\newtheorem{corollary}[theorem]{Corollary}

\theoremstyle{remark}
\newtheorem{remark}[theorem]{Remark}

\newcommand{\R}{\mathbb R}

\newcommand{\eps}{\varepsilon}
\newcommand{\supp}{\operatorname{supp}}
\newcommand{\norm}[1]{\left\lVert #1\right\rVert}
\newcommand{\abs}[1]{\left\lvert #1\right\rvert}
\newcommand{\dd}{\,\mathrm d}
\newcommand{\cU}{c_u}
\newcommand{\cV}{c_v}
\newcommand{\cW}{c_w}
\newcommand{\zetaD}{\zeta}
\newcommand{\Simplex}{\Sigma}

\title{\textbf{Critical Three-Species Competition-Diffusion:}\\
Traveling-Wave Rigidity and the Fastest-Species Selection}
\author{
Haozhe Shu\thanks{Mathematical Institute, Tohoku University, Sendai, 980-8578, Japan; \url{shu.haozhe.t7@dc.tohoku.ac.jp }} \,and
Dongyuan Xiao\thanks{School of Mathematical Sciences, Shanghai Jiao Tong University, Shanghai, 200240, P.R. China; \url{dongyuanx@sjtu.edu.cn}}}
\date{August 2026}

\begin{document}
\maketitle

\begin{abstract}
We study the rigidity of traveling waves and long-time species selection in a one-dimensional critical three-species competition-diffusion system. We establish several rigidity results for traveling waves connecting distinct equilibria on the critical simplex. In the case where one species has a strictly larger Fisher--KPP spreading speed than the other two, an entropy method, combined with Gagliardo--Nirenberg and Nash inequalities yields uniform extinction of the slower species and convergence to the fastest-species equilibrium throughout every cone with speed below its KPP speed.
\end{abstract}

\tableofcontents

\section{Introduction}

Competition among multiple species is a classical topic in population dynamics, and Lotka--Volterra models provide a fundamental framework for describing competitive interactions \cite{Girardin,Guo2019,Lam2020,Lin Li,morita2009}. In spatially extended populations, competition is coupled with dispersal, and the long-time dynamics are governed not only by local interactions but also by the ability of different species to invade and spread through space. This naturally leads to fundamental questions about traveling waves \cite{Rodrigo}, spreading speeds \cite{CC,Kaneko Matsuzawa,Peng Wu Zhou}, and species selection \cite{Girardin Lam}. In multi-species systems, these questions become particularly subtle when different species have different spreading speeds, leading to nontrivial selection dynamics.

In this note, we investigate the critical competition system which is distinguished by the fact that all inter- and intra-specific competition coefficients are equal to one. 

The model is
\begin{equation}
\label{eq:system}
\begin{cases}
 u_t=d_1u_{xx}+r_1u(1-u-v-w),\\
 v_t=d_2v_{xx}+r_2v(1-u-v-w),\\
 w_t=d_3w_{xx}+r_3w(1-u-v-w),
\end{cases}
\qquad t>0,\ x\in\R,
\end{equation}
where $d_i,r_i>0$ ($i=1,2,3$).  

In the single-species case, the system reduces to the classical Fisher--KPP equation \cite{Fisher,KPP}
\begin{equation}
    u_t=du_{xx}+ru(1-u),\qquad t>0,\ x\in \mathbb{R}.
\end{equation}
There are two important properties of the KPP equation. Firstly, there exist nonnegative traveling waves $u(t,x)=U(x-ct)$ connecting two trivial steady-states $0$ and $1$ if and only if their speeds $c\geq c^\ast:=2\sqrt{dr}$. Secondly, solutions of the Cauchy problem of the KPP equation with nonnegative compactly supported initial datum satisfy
$$
\begin{aligned}
    &\lim_{t\to \infty}\sup_{x\geq |ct|} u(t,x)=0,\qquad \text{for all}\ c>c^\ast,\\
    &\lim_{t\to \infty}\sup_{x\leq |ct|} |1-u(t,x)|=0,\qquad \text{for all}\ c<c^\ast.
\end{aligned}
$$
Hence, the minimal speed $c^\ast$ of traveling waves corresponds to the spreading speed of solutions evolve from compactly supported initial datum. 

For our system (\ref{eq:system}), we use the scalar Fisher--KPP speeds
\begin{equation}
\label{eq:speeds}
\cU:=2\sqrt{d_1r_1},\qquad
\cV:=2\sqrt{d_2r_2},\qquad
\cW:=2\sqrt{d_3r_3}.
\end{equation}
The main long-time regime of interest is
\begin{equation}
\label{eq:speedorder}
\cU>\cV>\cW,
\end{equation}
but the qualitative winner-selection theorem only needs $\cU>\max\{\cV,\cW\}$.

Alfaro and Xiao proved that, in the two-species problem, the faster species excludes the slower one for compactly supported initial data and identified a central heat-type ``bump'' of the losing species; their result is particularly striking because the kinetic system has an entire line of equilibria rather than an isolated stable state \cite{AlfaroXiao2023}.  The present note organizes the analogous three-species problem and separates what can already be proved rigorously from the remaining sharp diffusive-core asymptotics.

There are two mathematical difficulties. The first is traveling-wave rigidity. Here, a traveling wave with speed $c\in\R$ is a bounded nonnegative profile
\[
(u,v,w)(t,x)=(U,V,W)(z),\qquad z=x-ct,
\]
solving
\begin{equation}
\label{eq:TW}
\begin{cases}
 d_1U''+cU'+r_1U(1-S)=0,\\
 d_2V''+cV'+r_2V(1-S)=0,\\
 d_3W''+cW'+r_3W(1-S)=0,
\end{cases}
\qquad S:=U+V+W.
\end{equation}
The set
\[
\Simplex:=\{(a_1,a_2,a_3)\in[0,\infty)^3:\ a_1+a_2+a_3=1\}
\]
is a two-dimensional continuum of equilibria. We are interested in waves satisfying
\begin{equation}
\label{eq:TWends}
(U,V,W)(-\infty)=A,
\qquad
(U,V,W)(+\infty)=B,
\qquad A,B\in\Simplex.
\end{equation}
In two species, Alfaro--Xiao exclude ultimately monotone waves connecting distinct points of the critical equilibrium line, while the existence of fully non-monotone waves is left open in general \cite{AlfaroXiao2023}.  In three species, the direct phase-plane mechanism no longer closes because at an extremum of $1-U-V-W$ the relation $U'+V'+W'=0$ does not determine the sign of a diffusivity-weighted derivative sum.  A different device is therefore needed.

The second difficulty arises in the innermost large-time region.  Elementary KPP comparison gives decay outside the fastest front, and the annulus between the fastest and the slower KPP speeds is accessible because the slower species are exponentially small there.  The nontrivial question is whether the selection can be propagated all the way through the core.  The key observation is the exact entropy
\begin{equation}
\label{eq:phiintro}
\Phi(u,v,w)=u-1-\log u+\frac{r_1}{r_2}v+\frac{r_1}{r_3}w,
\end{equation}
which satisfies
\begin{equation}
\label{eq:entropyintro}
\partial_t\Phi
=\partial_xJ-d_1\abs{\partial_x\log u}^2-r_1(1-u-v-w)^2,
\end{equation}
where 
\[
J=d_1\left(1-\frac1u\right)u_x+\frac{r_1d_2}{r_2}v_x+\frac{r_1d_3}{r_3}w_x.
\]
The term $-\log u$ is decisive: it simultaneously turns the reaction contribution into the square $-(1-u-v-w)^2$ and produces the diffusion dissipation $-|\partial_x\log u|^2$. This entropy provides the global-in-space control needed to propagate the fastest-species selection from the front region into the diffusive core.
\subsection{Main results}
The first set of results in this manuscript concerns the existence of traveling waves connecting distinct equilibria on the critical simplex $\Sigma$. 

For interior endpoints $(A,B\in \mathrm{int} \Simplex)$, we construct a family of entropies
\begin{equation}
\mathcal H_q(U,V,W)
:=\sum_{i=1}^3\frac1{r_i}\left(U_i-q_i\log U_i\right),\ (U_1,U_2,U_3)=(U,V,W),\ q\in \Simplex,
\end{equation}
whose dissipation along a traveling wave yields constraints on the two asymptotic equilibria. Applying these properties to the vertices of $\Simplex$ reduces the existence of a heteroclinic connection to a set of incompatible inequalities, and hence excludes nontrivial waves without any monotonicity assumption. 

If the endpoints lie on the boundary of $\Simplex$, the rigidity of traveling waves is investigated in the following two scenarios. When the wave is stationary $(c=0)$, a direct calculation shows that $S\equiv 1$, after which boundedness forces the profile to be constant. When all diffusivities are equal $(d_1=d_2=d_3)$, summing governing equations of $U,V$ and $W$ yields a closed equation for $S=U+V+W$. In this scenario, we exclude traveling waves connecting two distinct vertices on $\Simplex$. 

The following theorem collects the rigidity results. We point out that the existence of nonzero-speed waves with unequal diffusivities and boundary endpoints on $\Simplex$ remains unsolved. 
\begin{theorem}[Traveling wave rigidity]
\noindent $\mathrm{(a)}$ Let $A,B\in\operatorname{int}\Simplex$ with $A\ne B$. Then for arbitrary $d_i,r_i>0$ and arbitrary $c\in\R$, there is no nonconstant bounded nonnegative traveling wave of \eqref{eq:TW} satisfying \eqref{eq:TWends}.

\noindent $\mathrm{(b)}$ Let $A,B\in\Simplex$, $A\ne B$.  There is no bounded nonnegative standing wave ($c=0$) of \eqref{eq:TW} connecting $A$ to $B$.

\noindent $\mathrm{(c)}$ Assume $d_1=d_2=d_3=:d$.  Then for arbitrary $r_i>0$, arbitrary $c\in\R$, and arbitrary distinct $A,B\in\Simplex$, there is no bounded nonnegative traveling wave connecting $A$ to $B$.
\end{theorem}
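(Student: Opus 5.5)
\emph{Part (c).} This is the cleanest case, so I would do it first. With $d_1=d_2=d_3=d$, summing the three equations of \eqref{eq:TW} gives
\[
dS''+cS'+R(z)(1-S)=0,\qquad R:=r_1U+r_2V+r_3W\ge 0,
\]
with $S(\pm\infty)=1$. Suppose $\sup S>1$. Then the supremum is attained at some interior point $z_0$, where $S'(z_0)=0$ and $S''(z_0)\le 0$. Since $S(z_0)>1$, we have $R(z_0)>0$, so $R(z_0)(1-S(z_0))<0$, and the left-hand side is strictly negative, a contradiction. The case $\inf S<1$ is symmetric. Hence $S\equiv 1$. Each component then solves $dU_i''+cU_i'=0$, and a bounded solution of this equation is constant. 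This forces $A=B$.

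\emph{Part (a).} I would use the entropy $\mathcal H_q$. For $q\in\Simplex$, multiply the $i$-th equation by $\frac1{r_i}\bigl(1-\frac{q_i}{U_i}\bigr)$ and sum. Since $\sum_i q_i=1$, the reaction terms combine into $(S-1)(1-S)=-(1-S)^2$. The diffusion terms satisfy $(1-q/U)U''=((1-q/U)U')'-qU'^2/U^2$. This yields the identity
\[
\frac{d}{dz}\Bigl[\sum_i\frac{d_i}{r_i}\Bigl(1-\frac{q_i}{U_i}\Bigr)U_i'+c\,\mathcal H_q(U,V,W)\Bigr]
=\sum_i\frac{d_iq_i}{r_i}\Bigl(\frac{U_i'}{U_i}\Bigr)^2+(1-S)^2 .
\]
Because $A,B\in\operatorname{int}\Simplex$, every $U_i$ is bounded away from $0$ near $\pm\infty$. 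Standard elliptic estimates then give $U_i'\to0$ at $\pm\infty$, so the flux terms vanish at both ends. Integrating over $\R$ gives
\[
c\bigl(\mathcal H_q(B)-\mathcal H_q(A)\bigr)=\int_\R(\text{dissipation})\,dz .
\]
For $q$ interior, the dissipation vanishes identically only if the profile is constant. Hence a nonconstant wave forces $c\ne0$ and $c(\mathcal H_q(B)-\mathcal H_q(A))>0$. By reflecting $z\mapsto -z$ (which exchanges $A$ and $B$ and changes the sign of $c$), we may assume $c>0$. Now choose $q=B$. Then
\[
\mathcal H_B(B)-\mathcal H_B(A)=\sum_i\frac1{r_i}\Bigl[(B_i-A_i)-B_i\log\frac{B_i}{A_i}\Bigr]\le 0,
\]
since $\log x\ge 1-1/x$ applied with $x=B_i/A_i$. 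This contradicts the strict positivity obtained above.

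\emph{Part (b), $c=0$.} The same identity says that $K_q:=\sum_i\frac{d_i}{r_i}(U_i-q_i\log U_i)$ satisfies $K_q''=\text{dissipation}\ge0$, so $K_q$ is convex. There are two cases.
\begin{itemize}[leftmargin=*]
\item If some index $j$ has $A_j,B_j>0$, take $q=e_j$. Then $K_{e_j}$ is bounded on $\R$; a bounded convex function is constant, so the dissipation vanishes and $S\equiv1$.
\item If no such index exists, the endpoints have disjoint supports. I would then take $q=e_j$ with $A_j>0=B_j$ and aim to show that $K_q'$ has nonpositive limit at $+\infty$. This uses a Harnack bound on $U_j'/U_j$ together with the linearization at $B$: when $c=0$ and $S\to1$, $U_j$ cannot decay exponentially, so $U_j'/U_j\to0$.
\end{itemize}
Convexity together with vanishing slopes at both ends then forces $K_q'\equiv0$, hence again $S\equiv1$. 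Once $S\equiv1$, each $U_i''=0$, and boundedness makes every component constant.

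The main obstacle is this boundary-endpoint case in (b): controlling the logarithmic flux $U_j'/U_j$ as $U_j\to0$. If the Harnack/linearization route fails, my fallback is to work with $T:=\sum_i\frac{d_i}{r_i}U_i$, which satisfies $T''=S(S-1)$, and combine it with the convexity of $K_q$ to rule out $S\not\equiv1$.
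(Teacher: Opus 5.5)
Parts (a) and (c) work, but part (b) has a genuine gap, and it sits in the boundary case that part (b) exists to cover.

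\textbf{Part (b).} Your first case (some $j$ with $A_j,B_j>0$) is fine. The second case, disjoint supports such as $A=(1,0,0)$, $B=(0,1,0)$, is the substantive one. There you only announce the fact you need, namely $U_j'/U_j\to0$ at $+\infty$, and the justification you give does not establish it. A Harnack inequality only bounds $U_j'/U_j$. The linearization at $B$ is degenerate: with $c=0$ and $1-S\to0$, the $U_j$-equation linearizes to $U_j''=0$, so it does not decide the decay rate.

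The fact itself is true, so your route is salvageable. Set $\phi=U_j'/U_j$; it solves $\phi'=-\phi^2-\frac{r_j}{d_j}(1-S)$. Once $\frac{r_j}{d_j}\abs{1-S}\le\delta$, any value $\phi<-2\sqrt\delta$ forces blow-up at finite $z$. Any value above $2\sqrt\delta$ drops below it in finite $z$ and can never recross, since $\phi'<0$ at $\phi=2\sqrt\delta$. Hence $\phi\to0$. As written, however, this step is missing from your proposal.

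More to the point, the missing idea makes the whole case split unnecessary. For $c=0$, divide the $i$-th equation by $d_i$ rather than $r_i$ before summing. The second-order terms then add up to exactly $S''$:
\[
S''+A_0(z)(1-S)=0,\qquad A_0:=\frac{r_1}{d_1}U+\frac{r_2}{d_2}V+\frac{r_3}{d_3}W\ge0 .
\]
This is your part (c) equation with $d=1$, $c=0$. The same maximum-principle argument gives $S\equiv1$ for all $A,B\in\Simplex$ at once, with no logarithmic fluxes; this is the paper's proof. Your fallback $T=\sum_i\frac{d_i}{r_i}U_i$ uses the weights the wrong way round: $T''=S(S-1)$ links two different functions, so no maximum principle applies to it.

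\textbf{Part (c)} is the paper's argument, with one detail to fix. The minimum case is not quite symmetric. At a minimum point with $S(z_0)=0$ one has $R(z_0)=0$, and the equation yields no contradiction. There you need that each $U_i\ge0$ satisfies $U_i(z_0)=U_i'(z_0)=0$. Uniqueness for the linear ODE $dU_i''+cU_i'+r_i(1-S)U_i=0$ then gives $U_i\equiv0$, contradicting $S(\pm\infty)=1$.

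\textbf{Part (a)} is correct and takes a different route from the paper. The paper uses the three vertex entropies $q=e_i$ and obtains $c(L-m_i)>0$ for $i=1,2,3$. It then writes $L=\sum_i\ell_i m_i$ with logarithmic means $\ell_i\le\frac{a_i+b_i}{2}$, which traps $L$ between $\min_i m_i$ and $\max_i m_i$.

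You instead reflect to $c>0$ and take the relative-entropy weight $q=B$. The contradiction is then immediate, because $\mathcal H_q$ is minimized at $U=q$. Your route is shorter. It also involves $\log U_i$ only for $i\in\supp B$, so the same computation excludes waves with $c>0$ whenever $\supp B\subseteq\supp A$, for instance from an interior point to a vertex. The paper's averaging needs all three $m_i$, and hence all endpoint coordinates positive.
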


The second main result establishes selection of the fastest species. The proof combines an entropy estimate in an expanding cone with interpolation inequalities and scalar KPP comparison arguments. The entropy provides a uniform $L^1$ bound for the slower species, while the Gagliardo--Nirengerg and Nash inequalities provide nice estimates of nonlinear reaction terms and the diffusion dissipation. Once $V$ and $W$ are uniformly small, the $U$-equation can be compared from below with a scaler Fisher--KPP equation, yielding convergence of $U$ to 1 throughout every cone with speed below $c_u$. Hence, we can obtain the following theorem.
\begin{theorem}
Assume \eqref{eq:IC} and $\cU>\max\{\cV,\cW\}$.  Then
\begin{equation}
\norm{v(t)}_\infty+\norm{w(t)}_\infty\to0.
\end{equation}
Moreover,
\begin{equation}
\sup_{|x|\le ct}\abs{u(t,x)-1}\to0\quad\text{for every }c<\cU,
\end{equation}
and
\begin{equation}
\sup_{|x|\ge ct}u(t,x)\to0\quad\text{for every }c>\cU.
\end{equation}
Thus $u$ is the unique surviving species and its qualitative spreading speed is $\cU$.
\end{theorem}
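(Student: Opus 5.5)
We first establish the two outer statements by scalar KPP comparison, then propagate selection through the core with the entropy $\Phi$ of \eqref{eq:phiintro}. We take \eqref{eq:IC} to mean nonnegative, bounded, compactly supported data with $u_0\not\equiv0$.

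\textbf{Step 1 (outer region and annulus).} Each component is a subsolution of its own scalar KPP equation, since $1-u-v-w\le 1-u$. Hence $u\le\bar u$, where $\bar u$ solves $\bar u_t=d_1\bar u_{xx}+r_1\bar u(1-\bar u)$ with the same data, and similarly for $v$ and $w$. The classical spreading result then gives $\sup_{|x|\ge ct}u\to0$ for every $c>\cU$. It also gives $v,w\to0$ uniformly on $|x|\ge ct$ for every $c>\max\{\cV,\cW\}$. In fact, linear comparison gives exponential smallness: $v\le Ce^{-\lambda(|x|-c't)}$ for suitable $c'$ just above $\cV$. Fix $\max\{\cV,\cW\}<c_1<c_2<\cU$. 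On the annulus $c_1t\le|x|\le c_2t$, the functions $v$ and $w$ are exponentially small. There $u$ is a supersolution of a perturbed KPP equation $u_t\ge d_1u_{xx}+r_1u(1-\delta-u)$. A standard sub-solution argument, using compactly supported KPP bumps moving at speed slightly less than $\cU$ and started once $u$ is positive (which the strong maximum principle guarantees), then gives $\liminf u\ge1-\delta$ on that annulus. This step is routine.

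\textbf{Step 2 (entropy in the core).} Integrate \eqref{eq:entropyintro} against a cutoff $\chi(x/t)$ that equals $1$ on $|x|\le c_1t$ and is supported in $|x|\le c_2t$. The flux and the time derivative of the cutoff only produce terms supported in the annulus. By Step 1, $u\approx1$ there, $v$ and $w$ are tiny, and derivatives are controlled by parabolic regularity. So these boundary terms are $o(1)$ times $t$, or better, once we use the exponential smallness of $v,w$ and the fact that the $\log u$ terms vanish at $u=1$. Writing $F(t)=\int\chi\Phi$, we get
\[
F(t)+\int_{T}^{t}\!\!\int\chi\bigl(d_1|\partial_x\log u|^2+r_1(1-u-v-w)^2\bigr)\le F(T)+o(t).
\]
Since $\Phi\ge \frac{r_1}{r_2}v+\frac{r_1}{r_3}w$, this yields an $L^1$ bound on $v,w$ over the core, and space-time integrability of the dissipation. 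This is the \emph{main obstacle}. The $o(t)$ error is too weak to bound $F$ uniformly. Moreover, $F(T)$ involves $-\log u$ on the core at the starting time, where $u$ might be small.

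To handle this, first prove a positive lower bound for $u$ at times $t\ge1$ on bounded sets, via Harnack. Then choose the cutoff speeds so that the boundary flux is exponentially small rather than $o(t)$. I would try time-dependent cutoffs $\chi(x-c_1t)$ glued symmetrically, so that all error terms live where $v,w\le Ce^{-\mu t}$ and $|u-1|$ decays. The exponential decay of $1-u$ there would itself come from linearization near the stable equilibrium $(1,0,0)$ for the $u$-equation with tiny forcing.

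\textbf{Step 3 (upgrade to uniform smallness).} With $\int_{|x|\le c_1t}(v+w)\le C$ and $\int_T^\infty\!\int\chi(1-S)^2<\infty$, use the Nash inequality for $v$. Note that $v$ solves $v_t=d_2v_{xx}+r_2v(1-S)$, with growth rate $r_2(1-S)$ that is small in $L^2_{t,x}$. We would show $\norm{v(t)}_{L^2}^2$ satisfies
\[
\frac{d}{dt}\norm{v}_2^2\le -c\norm{v}_2^{6}/\norm{v}_1^4+2r_2\int(1-S)v^2 .
\]
The last term is bounded via Gagliardo--Nirenberg by $\norm{1-S}_2\norm{v}_1^{1/2}\norm{v_x}_2^{3/2}$-type bounds, absorbed into the dissipation up to an integrable-in-time factor. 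This gives $\norm{v}_2\to0$. Parabolic smoothing then gives $\norm{v}_\infty\to0$ on the core, and the same argument applies to $w$. Combined with Step 1, this yields $\norm{v}_\infty+\norm{w}_\infty\to0$.

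\textbf{Step 4 (conclusion).} Once $v+w\le\delta$ everywhere for $t\ge T_\delta$, $u$ is a supersolution of $u_t=d_1u_{xx}+r_1u(1-\delta-u)$. Its spreading speed is $2\sqrt{d_1r_1(1-\delta)}$ and its limit is $1-\delta$. Also $u\le\bar u\to$ values $\le1$ uniformly. Letting $\delta\to0$ gives $\sup_{|x|\le ct}|u-1|\to0$ for every $c<\cU$.
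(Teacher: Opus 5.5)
Your plan is essentially the paper's proof: KPP comparison outside and on an intermediate annulus, then exponential control of $1-u,v,w$ near the moving boundary $|x|=ct$ with $\max\{\cV,\cW\}<c<\cU$ (your linearization of $1-u$ is the paper's Lemma~\ref{lem:ray}). This feeds the entropy $\Phi$ on that cone, giving uniform $L^1$ bounds for $v,w$ and $\int_T^\infty\!\int_{|x|\le cs}\zeta^2\dd x\dd s<\infty$, followed by the $L^2$ energy estimate closed with Gagliardo--Nirenberg, Nash and an ODE lemma, heat-semigroup smoothing, and KPP lower comparison for $u$; the Harnack step is unnecessary, since $u(T,\cdot)>0$ is continuous on the compact interval $[-cT,cT]$, so $F(T)<\infty$.

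One correction in Step~3: the one-dimensional interpolation is $\norm{v}_{L^4}^2\le C\norm{v}_1\norm{v_x}_2$, not $\norm{v}_1^{1/2}\norm{v_x}_2^{3/2}$. Your version fails scaling, and after Young it would leave a forcing $\norm{\zeta}_{L^2}^4$ that is not known to be time-integrable. With the correct exponents and the uniform mass bound $M_v$, Young gives $\frac{d_2}{2r_2}\norm{v_x}_2^2+CM_v^2\norm{\zeta}_{L^2(|x|\le ct)}^2$, and this forcing is integrable in time.
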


The paper is organized as follows.  Section~\ref{sec:prelim} sets the framework.  Section~\ref{sec:TW} gives the traveling-wave results.  Sections~\ref{sec:entropy}--\ref{sec:selection} prove fastest-species selection and give a complete regional description away from the fastest transition front.  
\section{Framework and elementary bounds}
\label{sec:prelim}

We assume throughout that
\begin{equation}
\label{eq:IC}
0\le u_0,v_0,w_0\le1,\qquad
u_0,v_0,w_0\in C_c(\R),
\end{equation}
At least $u_0$ is nontrivial, and whenever a conclusion concerns $v$ or $w$ as a positive component we assume the corresponding initial datum is nontrivial.  The standard parabolic comparison principle gives
\begin{equation}
\label{eq:bounds}
0<u(t,x)\le1,\qquad 0\le v(t,x)\le1,\qquad 0\le w(t,x)\le1,
\end{equation}
for $t>0$, with strict positivity for each nontrivial component.

For later use, suppressing competition gives the scalar upper comparisons
\begin{equation}
\label{eq:scalarupper}
 u_t\le d_1u_{xx}+r_1u(1-u),\quad
 v_t\le d_2v_{xx}+r_2v(1-v),\quad
 w_t\le d_3w_{xx}+r_3w(1-w).
\end{equation}
In particular, the standard Fisher--KPP spreading theorem implies that every component is negligible beyond any speed larger than its own scalar speed \cite{AronsonWeinberger1978}.  A cruder but useful exponential tail follows already from
\[
v_t\le d_2v_{xx}+r_2v,\qquad w_t\le d_3w_{xx}+r_3w.
\]
If, for example, $\supp v_0\subset[-R,R]$, then
\begin{equation}
\label{eq:gausstail}
 v(t,x)\le Ct^{-1/2}\exp\!\left(r_2t-\frac{(\abs{x}-R)^2}{4d_2t}\right),
\end{equation}
which is exponentially small on $|x|\ge ct$ whenever $c>\cV$.

\section{Traveling-wave rigidity}
\label{sec:TW}
In this section, we investigate the existence of traveling wave solutions $(U,V,W)(z)$ which satisfy
\begin{equation}
\begin{cases}
 d_1U''+cU'+r_1U(1-S)=0,\\
 d_2V''+cV'+r_2V(1-S)=0,\\
 d_3W''+cW'+r_3W(1-S)=0,\\
 (U,V,W)(-\infty)=A,\\
 (U,V,W)(+\infty)=B,
\end{cases}
\qquad S:=U+V+W,\ A,B\in \Simplex.
\end{equation}

Firstly, it is clear that standard ODE compactness applied to bounded profiles gives $U',V',W'\to0$ at both ends.  If $A,B\in\operatorname{int}\Simplex$, the strong maximum principle yields $U,V,W>0$ on $\R$. 
\subsection{An entropy family for interior endpoints}

For $q=(q_1,q_2,q_3)\in\Simplex$, define
\begin{equation}
\label{eq:Hq}
\mathcal H_q(U,V,W)
:=\sum_{i=1}^3\frac1{r_i}\left(U_i-q_i\log U_i\right),
\qquad (U_1,U_2,U_3)=(U,V,W).
\end{equation}
For a positive classical solution of the PDE, direct differentiation gives
\begin{equation}
\label{eq:HidentityPDE}
\partial_t\mathcal H_q
=\partial_x\mathcal J_q
-\sum_{i=1}^3\frac{d_iq_i}{r_i}\abs{\partial_x\log U_i}^2
-(1-U-V-W)^2,
\end{equation}
where
\[
\mathcal J_q
=\sum_{i=1}^3\frac{d_i}{r_i}\left(1-\frac{q_i}{U_i}\right)(U_i)_x.
\]
Indeed the reaction contribution is
\[
\sum_{i=1}^3(U_i-q_i)(1-S)
=(S-1)(1-S)=-(1-S)^2.
\]
For a traveling wave, \eqref{eq:HidentityPDE} becomes
\begin{equation}
\label{eq:Htwderivative}
\frac{d}{dz}\bigl(\mathcal J_q+c\mathcal H_q\bigr)
=(1-S)^2+\sum_{i=1}^3\frac{d_iq_i}{r_i}\left(\frac{U_i'}{U_i}\right)^2\ge0.
\end{equation}
Integrating over $\R$ yields
\begin{equation}
\label{eq:Htwint}
 c\bigl[\mathcal H_q(B)-\mathcal H_q(A)\bigr]
=\int_\R\left[(1-S)^2+\sum_{i=1}^3\frac{d_iq_i}{r_i}\left(\frac{U_i'}{U_i}\right)^2\right]\dd z.
\end{equation}
For a nonconstant wave the right-hand side is strictly positive.  Indeed, if $1-S\equiv0$, then each component solves $d_iU_i''+cU_i'=0$ and boundedness at both ends forces every component to be constant.

\begin{theorem}[No waves between distinct interior equilibria]
\label{thm:interiorTW}
Let $A,B\in\operatorname{int}\Simplex$ with $A\ne B$. Then for arbitrary $d_i,r_i>0$ and arbitrary $c\in\R$, there is no nonconstant bounded nonnegative traveling wave of \eqref{eq:TW} satisfying \eqref{eq:TWends}.
\end{theorem}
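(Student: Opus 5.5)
The plan is to feed well-chosen values of $q$ into the integrated identity \eqref{eq:Htwint} and read off a sign contradiction in each of the three cases $c=0$, $c>0$, $c<0$. Suppose, for contradiction, that a nonconstant bounded nonnegative wave $(U,V,W)$ connects $A\neq B$ in $\operatorname{int}\Simplex$.

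\textbf{Justifying \eqref{eq:Htwint}.} By the strong maximum principle remark above, $U_i>0$ on $\R$. Because the endpoint limits $A_i,B_i$ are strictly positive, each $U_i$ is bounded away from zero near $\pm\infty$. Together with $U_i'\to0$ at both ends, this gives $\mathcal J_q(z)\to0$ as $z\to\pm\infty$. Also $\mathcal H_q(U,V,W)(z)\to\mathcal H_q(A)$ or $\mathcal H_q(B)$ by continuity of $\mathcal H_q$ on $(0,\infty)^3$. By \eqref{eq:Htwderivative}, $\mathcal J_q+c\mathcal H_q$ is nondecreasing with finite limits at both ends. Hence the nonnegative integrand in \eqref{eq:Htwint} is integrable, and the identity holds for every $q\in\Simplex$.

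\textbf{Strict positivity of the right-hand side.} The paper has already observed that the right-hand side is strictly positive for a nonconstant wave, since $1-S\equiv0$ forces constancy. Therefore $c\bigl[\mathcal H_q(B)-\mathcal H_q(A)\bigr]>0$ for every $q\in\Simplex$. This immediately rules out $c=0$.

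\textbf{Case $c>0$: take $q=B$.} Each summand $s\mapsto\frac1{r_i}(s-q_i\log s)$ is strictly convex on $(0,\infty)$ with its minimum at $s=q_i$. Hence $\mathcal H_B$ attains its global minimum over $(0,\infty)^3$ at $B$, so $\mathcal H_B(B)-\mathcal H_B(A)\le0$. Equivalently, the elementary inequality $B_i\log(B_i/A_i)\ge B_i-A_i$ gives
\[
\mathcal H_B(B)-\mathcal H_B(A)=\sum_{i=1}^3\frac1{r_i}\Bigl[(B_i-A_i)-B_i\log\frac{B_i}{A_i}\Bigr]\le0 .
\]
Then $c\bigl[\mathcal H_B(B)-\mathcal H_B(A)\bigr]\le0$, contradicting strict positivity.

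\textbf{Case $c<0$: take $q=A$.} The same convexity gives $\mathcal H_A(B)-\mathcal H_A(A)\ge0$. Then $c\bigl[\mathcal H_A(B)-\mathcal H_A(A)\bigr]\le0$, again a contradiction. Alternatively, the reflection $z\mapsto-z$ swaps $A$ and $B$ and sends $c\mapsto-c$, reducing this case to $c>0$.

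\textbf{Main obstacle.} The one genuinely delicate point is choosing the right $q$, since evaluating only at the vertices of $\Simplex$ is not obviously decisive. The key observation is that $\mathcal H_q(B)-\mathcal H_q(A)$ is affine in $q$, and the interior choices $q=A$ or $q=B$ produce the definite signs through $\mathcal H_q\ge\mathcal H_q(q)$. The only technical care needed is the vanishing of $\mathcal J_q$ at infinity, and this is exactly where $A,B\in\operatorname{int}\Simplex$ is used, because the factors $q_i/U_i$ stay bounded there.
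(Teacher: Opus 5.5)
Your proof is correct, but it closes the contradiction by a different route than the paper. You also supply the boundary-term justification of the integrated identity, which the paper leaves implicit.

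The paper uses only the vertex entropies $q=e_i$. This gives $c(L-m_i)>0$ for $i=1,2,3$, with $L=\sum_j (b_j-a_j)/r_j$ and $m_i=\frac1{r_i}\log(b_i/a_i)$. It refutes these by writing $L=\sum_i\ell_i m_i$ with logarithmic means $\ell_i$, and using $\ell_i\le(a_i+b_i)/2$ to get $\min_i m_i\le L\le\max_i m_i$.

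You instead take $q=B$ or $q=A$ and use only that $\mathcal H_q$ is minimized at $q$. Your affine-in-$q$ remark connects the two routes. Since $\mathcal H_q(B)-\mathcal H_q(A)=\sum_i q_i(L-m_i)$, your two inequalities say exactly $\sum_i a_im_i\le L\le\sum_i b_im_i$. This is sharper than the paper's sandwich and comes from $\log x\ge 1-1/x$ rather than the logarithmic--arithmetic mean inequality. Conversely, if the three vertex inequalities held, convexity would carry them to $q=A$ and $q=B$. So evaluating at the vertices is in fact decisive, contrary to your ``main obstacle'' remark; it just needs the extra averaging step that you bypass.

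Your route buys brevity, no sign bookkeeping for $m_\pm$, and an immediate extension beyond the interior. $\mathcal H_B$ contains $\log U_i$ only for indices with $b_i>0$. Hence the same argument rules out waves with $c>0$ whenever $\supp B\subseteq\supp A$ (supports meaning the index sets of positive coordinates), and with $c<0$ whenever $\supp A\subseteq\supp B$. This covers, for example, two endpoints on the same open edge of $\Simplex$, even if the third component forms a positive pulse.

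The paper's route in exchange works with three fixed, endpoint-independent entropies. It also yields the scalar necessary conditions $c(L-m_i)>0$ individually, each available as soon as $a_i,b_i>0$ for that single index.
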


\begin{proof}
The case $c=0$ follows immediately from \eqref{eq:Htwint}: its left-hand side is zero while a nonconstant wave would make the right-hand side positive.  Assume $c\ne0$.

Take successively $q=e_i$, $i=1,2,3$.  Set
\begin{equation}
\label{eq:Lmi}
L:=\sum_{j=1}^3\frac{b_j-a_j}{r_j},
\qquad
m_i:=\frac1{r_i}\log\frac{b_i}{a_i}.
\end{equation}
Then
\[
\mathcal H_{e_i}(B)-\mathcal H_{e_i}(A)=L-m_i.
\]
Hence \eqref{eq:Htwint} gives
\begin{equation}
\label{eq:signLmi}
 c(L-m_i)>0,\qquad i=1,2,3.
\end{equation}
If $c>0$, then $L>\max_i m_i$; if $c<0$, then $L<\min_i m_i$.

We now show that neither is possible.  Define the logarithmic means
\[
\ell_i:=
\begin{cases}
\displaystyle\frac{b_i-a_i}{\log b_i-\log a_i},&b_i\ne a_i,\\[1.2ex]
a_i,&b_i=a_i.
\end{cases}
\]
Then $\ell_i>0$ and
\begin{equation}
\label{eq:Llogmean}
L=\sum_{i=1}^3\ell_i m_i.
\end{equation}
The logarithmic mean is bounded by the arithmetic mean,
\[
\ell_i\le\frac{a_i+b_i}{2},
\]
so
\begin{equation}
\label{eq:sumell}
\sum_i\ell_i\le\frac12\sum_i(a_i+b_i)=1.
\end{equation}
Since $A\ne B$ but $\sum a_i=\sum b_i=1$, at least one $b_i-a_i$ is positive and at least one is negative; therefore
\[
m_-:=\min_i m_i<0<m_+:=\max_i m_i.
\]
Using \eqref{eq:Llogmean}--\eqref{eq:sumell},
\[
L\le m_+\sum_i\ell_i\le m_+,
\]
where negative terms only improve the upper bound.  Similarly,
\[
L\ge m_-\sum_i\ell_i\ge m_-,
\]
because $m_-<0$ and $\sum_i\ell_i\le1$.  Hence
\[
\min_i m_i\le L\le\max_i m_i,
\]
contradicting \eqref{eq:signLmi}.
\end{proof}

\begin{remark}
Theorem~\ref{thm:interiorTW} is stronger, for interior endpoints, than an "ultimately monotone" nonexistence statement: no monotonicity assumption is used at all.  The price is that the logarithmic entropy requires strictly positive endpoint coordinates.  This is exactly where boundary endpoints become different.
\end{remark}

\subsection{Boundary endpoints: two unconditional rigidity statements}

The following result does not require positivity of every endpoint coordinate.

\begin{theorem}[No nontrivial standing waves]
\label{thm:standing}
Let $A,B\in\Simplex$, $A\ne B$.  There is no bounded nonnegative standing wave ($c=0$) of \eqref{eq:TW} connecting $A$ to $B$.
\end{theorem}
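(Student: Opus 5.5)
The plan follows the hint in the introduction: first show $S\equiv1$, then use boundedness to force each profile to be constant. With $c=0$ the system reads $d_iU_i''=-r_iU_i(1-S)$. We cannot use $\mathcal H_q$ directly, since boundary endpoints may have vanishing coordinates and the logarithms blow up. Instead I would use a first integral that needs no positivity.

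\textbf{Step 1: a conserved quantity.} Multiply the $i$-th equation by $(d_i/r_i)U_i'$ and sum over $i$. This gives
\[
\frac{d}{dz}\sum_i\frac{d_i^2}{2r_i}(U_i')^2=-\sum_i d_iU_iU_i'(1-S).
\]
The right side is not an exact derivative unless the $d_i$ are equal, so I would instead weight by $1/r_i$. Multiplying the $i$-th equation by $U_i'/r_i$ gives
\[
\sum_i\frac{d_i}{r_i}U_i''U_i'=-(1-S)\sum_iU_iU_i'.
\]
This is still not closed. A better choice is to multiply the $i$-th equation by $1/r_i$ only and sum, which gives
\[
\Big(\sum_i\frac{d_i}{r_i}U_i\Big)''=-S(1-S).
\]
That equation is not closed in $S$ either. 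So I would switch to an energy built on the potential $F(S)$ with $F'(S)=-S(1-S)$, using the multiplier $U_i'$ times $d_i/r_i$. That choice leads back to the first identity above, which fails for unequal $d_i$.

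\textbf{Alternative route via integration.} Write $P:=\sum_i\frac{d_i}{r_i}U_i$, so that $P''=-S(1-S)$. Because $U_i'\to0$ at both ends, $P'(\pm\infty)=0$, and integrating gives $\int_\R S(1-S)\,dz=0$. This yields no sign information, because $S$ can exceed $1$.

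\textbf{Second attempt: a sign argument on $1-S$.} If $1-S$ does not change sign, then $\int S(1-S)=0$ together with $S>0$ forces $S\equiv1$; note $S>0$ everywhere unless the whole profile vanishes. For the general case I would look at each component. If $U_i\not\equiv0$, the strong maximum principle gives $U_i>0$, and then $(\log U_i)''d_i=-r_i(1-S)-d_i((\log U_i)')^2$. Summing with weights $1/r_i$ over the positive components gives $\sum_i\frac{d_i}{r_i}(\log U_i)''\le -N(1-S)$, where $N$ is the number of positive components. On any interval where $S\neq1$ we then have $(U_i'/U_i)$ behaving monotonically. Integrating this over $\R$ is problematic when an endpoint coordinate is zero, since $\log U_i$ diverges there. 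I would handle that by localizing: on any maximal interval $I$ where $S<1$, each positive $U_i$ satisfies $d_iU_i''=-r_iU_i(1-S)<0$, so it is strictly concave there. A bounded concave function on a half-line that tends to a limit must be monotone. Combining concavity of all components on $\{S<1\}$ with convexity on $\{S>1\}$, one gets $S''$-type information: $\sum\frac{d_i}{r_i}U_i=P$ is concave where $S<1$ and convex where $S>1$. Since $P$ is bounded with $P'\to0$ at both ends, a Rolle-type argument on the nodal intervals of $1-S$ should show that there are no sign changes. Then the single-sign case gives $S\equiv1$.

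\textbf{Conclusion and main obstacle.} Once $S\equiv1$, each component satisfies $U_i''=0$ and is bounded, hence constant, which contradicts $A\ne B$. The main obstacle is ruling out sign changes of $1-S$ when the $d_i$ differ. This is exactly where comparing $P$ with $S$ fails, because $P$ and $S$ are different linear combinations of the components. I would first try to control it via the concavity and convexity of each individual $U_i$ on the nodal intervals, together with the boundary behaviour $U_i'\to0$.
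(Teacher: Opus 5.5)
Your proposal does not close. The step you yourself call the main obstacle, excluding sign changes of $1-S$, is left open, and the tool you suggest for it cannot work. Information about $P=\sum_i (d_i/r_i)U_i$ alone is consistent with a sign change. Suppose $S>1$ on $(-\infty,0)$ and $S<1$ on $(0,\infty)$. Then $P''=-S(1-S)$ makes $P'$ increase strictly from $0$ and then decrease strictly back to $0$, so $P'>0$ on $\R$. This is compatible with boundedness of $P$, with $P'(\pm\infty)=0$, with the convexity/concavity pattern of $P$, and with $\int_\R S(1-S)\,dz=0$. A Rolle-type argument on $P$ has nothing to grip, because nothing fixes the values of $P$ at the zeros of $1-S$; only $S$ is known there.

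The missing idea is to weight by $1/d_i$ rather than $1/r_i$. Dividing the $i$-th equation by $d_i$ and summing gives the closed equation
\[
S''+A_0(z)(1-S)=0,\qquad A_0=\frac{r_1}{d_1}U+\frac{r_2}{d_2}V+\frac{r_3}{d_3}W\ge0,
\]
so $S''$ has the sign of $S-1$. The paper concludes with the maximum principle at a global extremum of $S$. It uses $A_0>0$ wherever $S>0$, and ODE uniqueness to rule out an interior zero of $S$.

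You essentially had this already. Your observation that each $U_i$ is concave on $\{S<1\}$ and convex on $\{S>1\}$ should be summed \emph{without weights}. That gives concavity (resp. convexity) of $S$ itself on each connected component of $\{S<1\}$ (resp. $\{S>1\}$). On such a component, $S=1$ at finite endpoints and $S\to1$ at $\pm\infty$. A concave $S$ on a component of $\{S<1\}$ would then satisfy $S\ge1$ there, which is a contradiction; the symmetric argument handles $\{S>1\}$. The facts needed are:
\begin{itemize}
\item a concave function lies above its chords;
\item a concave function with a finite limit at $+\infty$ is nondecreasing;
\item a bounded concave function on $\R$ is constant.
\end{itemize}
Hence $S\equiv1$, and your concluding step ($U_i''=0$ with $U_i$ bounded, so constant, contradicting $A\ne B$) finishes the proof.
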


\begin{proof}
For $c=0$, summing the three equations after division by the diffusivities gives
\begin{equation}
\label{eq:Sstanding}
\begin{aligned}
&S''+A_0(z)(1-S)=0,
\\
&A_0(z):=\frac{r_1}{d_1}U+\frac{r_2}{d_2}V+\frac{r_3}{d_3}W\ge0,
\end{aligned}
\end{equation}
with $S(\pm\infty)=1$.  A local maximum with $S>1$ would satisfy $S''\le0$ but \eqref{eq:Sstanding} gives $S''=A_0(S-1)>0$.  A local minimum with $0<S<1$ gives the opposite contradiction.  The value $S=0$ cannot occur at an interior point unless all components vanish there, which by uniqueness would force the entire profile to vanish.  Hence $S\equiv1$.  The three profile equations reduce to $U''=V''=W''=0$, so boundedness forces all components to be constant, contradicting $A\ne B$.
\end{proof}

\begin{theorem}[Equal diffusivities]
\label{thm:equald}
Assume $d_1=d_2=d_3=:d$.  Then for arbitrary $r_i>0$, arbitrary $c\in\R$, and arbitrary distinct $A,B\in\Simplex$, there is no bounded nonnegative traveling wave connecting $A$ to $B$.
\end{theorem}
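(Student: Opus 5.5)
The plan is to repeat the maximum-principle argument of Theorem~\ref{thm:standing}, using equal diffusivities to obtain a closed scalar equation for $S=U+V+W$ that includes the drift term. Summing the three profile equations in \eqref{eq:TW} with $d_1=d_2=d_3=d$ gives
\[
dS''+cS'+a(z)(1-S)=0,\qquad a(z):=r_1U+r_2V+r_3W\ge0,
\]
with $S(\pm\infty)=1$ because $A,B\in\Simplex$. The key point is that $a(z)\ge \min_i r_i\, S(z)$. So $a>0$ wherever $S>0$.

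\textbf{Step 1: $S\le 1$.} Suppose $\sup S>1$. Since $S\to1$ at both ends, the supremum is attained at some interior point $z_0$. There $S'(z_0)=0$ and $S''(z_0)\le0$. The equation instead gives
\[
dS''(z_0)=a(z_0)\bigl(S(z_0)-1\bigr)>0,
\]
because $a(z_0)\ge \min_i r_i\,S(z_0)>0$. This is a contradiction.

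\textbf{Step 2: $S\ge 1$.} Suppose $\inf S<1$, attained at an interior point $z_1$. If $S(z_1)>0$, the same computation gives $dS''(z_1)=a(z_1)(S(z_1)-1)<0$, which contradicts $z_1$ being a minimum. If $S(z_1)=0$, then by nonnegativity $U=V=W=0$ at $z_1$. Since $z_1$ is a minimum of each nonnegative component, the derivatives $U'$, $V'$, $W'$ also vanish there. ODE uniqueness for each linear-in-$U_i$ equation $dU_i''+cU_i'+r_iU_i(1-S)=0$ (with $S$ a given continuous coefficient) then forces $U_i\equiv0$. This contradicts $S(\pm\infty)=1$. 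Hence $S\equiv1$.

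\textbf{Step 3: each profile is constant.} With $S\equiv1$, each component solves $dU_i''+cU_i'=0$. Its solutions are $\alpha+\beta z$ if $c=0$, and $\alpha+\beta e^{-cz/d}$ if $c\ne0$. Boundedness on all of $\R$ forces $\beta=0$. So every component is constant, giving $A=B$, which contradicts the assumption.

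\textbf{Main obstacle.} The only delicate point is the degenerate case $S=0$ at an interior point in Step 2. There the coefficient $a$ vanishes and the sign argument is lost, so the componentwise uniqueness argument is needed. I would also check explicitly that the extrema are attained, which follows from the limits at $\pm\infty$. Everything else is identical to the standing-wave case: the drift term $cS'$ vanishes at critical points, so it does not affect the sign argument.
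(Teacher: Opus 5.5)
Your proposal is correct and follows essentially the same route as the paper. Like the paper, you sum the equations to get the closed equation $dS''+cS'+R(z)(1-S)=0$, use the maximum principle with $S(\pm\infty)=1$ to force $S\equiv1$, and then conclude from $dU_i''+cU_i'=0$ and boundedness that every component is constant. The only difference is that you write out the maximum-principle step in full, including the degenerate interior zero $S(z_1)=0$ handled by ODE uniqueness; the paper invokes this step in one line here and treats that case the same way in the standing-wave theorem.
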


\begin{proof}
Summing \eqref{eq:TW} gives
\[
\begin{aligned}
    &dS''+cS'+R(z)(1-S)=0,\\
    &R(z):=r_1U+r_2V+r_3W\ge0.
\end{aligned}
\]
Since $S(\pm\infty)=1$, the one-dimensional maximum principle gives $S\equiv1$.  Then each component solves
\[
dU_i''+cU_i'=0.
\]
A bounded solution on $\R$ with finite limits at both ends is constant, so $A=B$, a contradiction.
\end{proof}

\begin{remark}[What remains open for traveling waves]
\label{rem:TWopen}
For unequal diffusivities, nonzero speed, and boundary endpoints, the preceding arguments do not exclude every possible three-component wave.  A typical unresolved configuration is a wave connecting two vertices, say $(1,0,0)$ and $(0,1,0)$, while the third component forms a localized positive pulse in the interior.  The two-species phase-plane contradiction of Alfaro--Xiao does not directly extend because at an extremum of $1-U-V-W$ one has only $U'+V'+W'=0$, which does not fix the sign of $U'/d_1+V'/d_2+W'/d_3$.  Thus Theorems~\ref{thm:interiorTW}--\ref{thm:equald} should be viewed as the rigorous traveling-wave core, with the general boundary-endpoint problem left separate.
\end{remark}

\section{The fastest-species entropy and the cone estimate}
\label{sec:entropy}

We now turn to the Cauchy problem and assume
\begin{equation}
\label{eq:fastestonly}
\cU>c_s:=\max\{\cV,\cW\}.
\end{equation}
The exact ordering between $\cV$ and $\cW$ is not needed until we interpret the spatial regions.

\subsection{Exponential control on an intermediate ray}

Fix
\begin{equation}
\label{eq:intermediatec}
c_s<c<\cU.
\end{equation}
Because $c>\cV,\cW$, the Gaussian estimate \eqref{eq:gausstail} gives, on a small annulus around $|x|=ct$,
\begin{equation}
\label{eq:vwexpannulus}
v(t,x)+w(t,x)\le Ce^{-\kappa t}.
\end{equation}
On the same annulus, $u$ is below its scalar KPP speed.  Once the exponentially small forcing in \eqref{eq:vwexpannulus} is inserted into the $u$ equation, scalar KPP lower comparison on a slightly larger annulus yields $u\to1$ there.  Writing $p=1-u$,
\[
p_t=d_1p_{xx}-r_1u\,p+r_1u(v+w),
\]
and using $u\ge1/2$ for large time gives a damped linear inequality.  A moving-strip comparison then improves convergence to exponential convergence.  Standard interior parabolic estimates give the same rate for first derivatives.  We record the outcome.

\begin{lemma}[Intermediate-ray control]
\label{lem:ray}
For every $c$ satisfying \eqref{eq:intermediatec}, there exist $C,\kappa,T>0$ such that for $t\ge T$,
\begin{align}
&\abs{1-u(t,\pm ct)}+v(t,\pm ct)+w(t,\pm ct)\notag\\
&\qquad +\abs{u_x(t,\pm ct)}+\abs{v_x(t,\pm ct)}+\abs{w_x(t,\pm ct)}
\le Ce^{-\kappa t}.
\label{eq:raycontrol}
\end{align}
\end{lemma}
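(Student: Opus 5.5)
We prove Lemma~\ref{lem:ray} in four steps. First we get exponential smallness of $v,w$ on a slightly larger annulus around $|x|=ct$. Then we show $u\to1$ there by scalar KPP lower comparison. Next we upgrade this to an exponential rate through the linear equation for $p=1-u$. Finally, parabolic estimates give the bound on the derivatives. By symmetry it suffices to treat the ray $x=ct$.

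\textbf{Step 1 (slow species).} Choose $c_1,c_2$ with $c_s<c_1<c<c_2<\cU$ and set $\Omega_t=\{c_1t\le x\le c_2t\}$. On $\Omega_t$ we have $|x|-R\ge c_1t-R$. The Gaussian bound \eqref{eq:gausstail} then gives
\[
v+w\le Ct^{-1/2}\exp\bigl(\max_i(r_it-(c_1t-R)^2/(4d_i t))\bigr)\le Ce^{-2\kappa_0 t}
\]
for some $\kappa_0>0$, because $c_1^2>4d_2r_2,\,4d_3r_3$. The slope $\partial_x$ of the heat-kernel majorant is controlled in the same way. Alternatively, interior parabolic Schauder estimates on unit cylinders inside $\Omega_t$ give $|v_x|+|w_x|\le Ce^{-2\kappa_0 t}$, since the equations for $v,w$ are linear in $v,w$ with bounded coefficients.

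\textbf{Step 2 ($u\to1$ on a sub-annulus).} On $\Omega_t$ we have $u_t\ge d_1u_{xx}+r_1u(1-u-\eta(t))$ with $\eta(t)=Ce^{-2\kappa_0t}$. The first task is to show that $u$ is not small on $\Omega_t$. Outside $\Omega_t$ the competition is not exponentially small, since $v,w$ may be of order one for $|x|<c_st$. So we do not compare on all of $\R$. Instead we place KPP subsolutions on the moving frame $x=c't$ with $c'\in(c_1,c_2)$. Concretely, we use the standard compactly supported subsolution $\underline u(t,x)=\delta e^{-\frac{c'}{2d_1}(x-c't-\xi)}\cos(\omega(x-c't-\xi))$ on an interval of length $\pi/\omega$. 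It is a subsolution of $u_t=d_1u_{xx}+r_1(1-\eta-\delta')u$ whenever $c'<2\sqrt{d_1r_1(1-\eta-\delta')}$, and this holds because $c_2<\cU$.

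To place it we need a positive lower bound for $u$ at one time on the moving interval. For this we use the Aronson–Weinberger lower spreading for $u$, which is what I expect to be the \textbf{main obstacle}. The front of $u$ starts at the origin, where $v,w$ compete. Our plan is to first launch $u$ in the region $|x|>c_s' t$ with $c_s<c_s'<c_1$. There $v,w$ are already exponentially small after a time $T_0$, while $u(T_0,\cdot)>0$ everywhere by the strong maximum principle. We compare $u$ from below with the solution of the scalar KPP equation, with rate $1-\eta$, on the expanding domain $\{x>c_s't\}$ with zero Dirichlet data. Since this domain expands at speed $c_s'<\cU$, the standard result for KPP on moving domains gives $\liminf u\ge1-\epsilon$ uniformly on $c_1t\le x\le c_2t$ for large $t$. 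Combined with $u\le1$, this yields $\sup_{\Omega'_t}|1-u|\to0$ on a slightly smaller annulus $\Omega'_t$.

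\textbf{Step 3 (exponential rate).} On $\Omega'_t$, for $t\ge T$, we have $u\ge1/2$, and $p=1-u\ge0$ satisfies
\[
p_t\le d_1p_{xx}-\tfrac{r_1}{2}p+r_1\eta(t).
\]
We compare $p$ with a supersolution of the form
\[
\bar p=Me^{-\kappa t}+K\bigl(e^{\mu(x-c_+'t)}+e^{-\mu(x-c_-'t)}\bigr)e^{-\kappa' t}.
\]
Here $[c'_-t,c'_+t]=\Omega'_t$, $M,K$ are chosen to dominate on the lateral boundaries where $p\le1$, and $\mu$ is small so that each exponential remains a supersolution of the damped operator. The boundary terms decay like $e^{-\mu(c'_+-c)t}$ near $x=ct$. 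Hence $p(t,ct)\le Ce^{-\kappa t}$.

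\textbf{Step 4 (derivatives).} Interior parabolic estimates for $p$ on unit cylinders centred at $(t,ct)$ bound $|u_x(t,ct)|$ by $\sup|p|+\sup\eta$ over the cylinder, which is $\le Ce^{-\kappa t}$. The derivatives $v_x,w_x$ were already handled in Step 1. This gives \eqref{eq:raycontrol}.
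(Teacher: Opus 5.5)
Your proposal follows the paper's own route. The steps match: a Gaussian tail for $v,w$ on an annulus, scalar KPP lower comparison kept away from the competitive core, the damped inequality for $p=1-u$ closed by a moving-strip supersolution, and interior parabolic estimates for the derivatives. It is correct after one repair in Step~3.

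\textbf{Step 3.} The boundary-layer terms must not carry a decaying factor $e^{-\kappa' t}$ with $\kappa'>0$. On the lateral boundaries $x=c'_\pm t$ you only know $p\le 1$, or $p\to 0$ with no rate, so a decaying factor there cannot dominate $p$. Take $\kappa'=0$ instead.
\begin{itemize}
\item The supersolution condition for the boundary-layer terms then reduces to $d_1\mu^2+\mu c'_+\le r_1/2$.
\item Choosing $K\ge 1$ gives domination on the lateral boundaries.
\item The term $Me^{-\kappa t}$ absorbs the source if $\kappa\le\min\{r_1/4,2\kappa_0\}$ and $M$ is large.
\item The decay at $x=ct$, and within unit distance of it, comes from the spatial factors $e^{-\mu(c'_+-c)t}$ and $e^{-\mu(c-c'_-)t}$. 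This is what your next sentence already asserts, and Step~4 needs that neighbourhood bound.
\end{itemize}

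\textbf{Step 1.} A pointwise heat-kernel majorant for $v,w$ does not control $v_x,w_x$. Use the interior-estimate alternative you also give there instead.
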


\begin{remark}
Only the two moving boundary points of the cone are used later.  No information from the inner region $|x|<ct$ is assumed in Lemma~\ref{lem:ray}.
\end{remark}

\subsection{Exact entropy identity}

Set
\begin{equation}
\label{eq:zeta}
\zetaD:=1-u-v-w
\end{equation}
and define the fastest-species entropy
\begin{equation}
\label{eq:Phi}
\Phi(u,v,w):=u-1-\log u+\frac{r_1}{r_2}v+\frac{r_1}{r_3}w.
\end{equation}
Since $s-1-\log s\ge0$ for $s>0$,
\begin{equation}
\label{eq:Phinonneg}
\Phi\ge\frac{r_1}{r_2}v+\frac{r_1}{r_3}w\ge0.
\end{equation}

\begin{proposition}[Local entropy identity]
\label{prop:localentropy}
For every $t>0$,
\begin{equation}
\label{eq:localentropy}
\partial_t\Phi
=\partial_xJ-d_1\left(\frac{u_x}{u}\right)^2-r_1\zetaD^2,
\end{equation}
where
\begin{equation}
\label{eq:J}
J=d_1\left(1-\frac1u\right)u_x+\frac{r_1d_2}{r_2}v_x+\frac{r_1d_3}{r_3}w_x.
\end{equation}
\end{proposition}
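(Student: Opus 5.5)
The identity is a direct chain-rule computation. It relies on the positivity $u>0$ for $t>0$ from \eqref{eq:bounds}, which makes $\log u$ and $1/u$ smooth along the classical solution. I would differentiate $\Phi$ in time term by term and substitute the three equations of \eqref{eq:system}. The reaction contributions should combine into a perfect square, and the diffusion contributions should split into an exact $x$-derivative plus a negative dissipation term. This is the same computation as \eqref{eq:HidentityPDE} with $q=e_1$, multiplied through by $r_1$.

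\textbf{Reaction part.} Since $\partial_\Phi/\partial u=1-1/u$, $\partial_\Phi/\partial v=r_1/r_2$ and $\partial_\Phi/\partial w=r_1/r_3$, the reaction terms give
\[
\Bigl(1-\tfrac1u\Bigr)r_1u\zetaD+\tfrac{r_1}{r_2}r_2v\zetaD+\tfrac{r_1}{r_3}r_3w\zetaD
=r_1\zetaD\,(u-1+v+w)=-r_1\zetaD^2.
\]
The weights $r_1/r_2$ and $r_1/r_3$ are chosen exactly so that all three growth rates become $r_1$, which allows the sum to collapse.

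\textbf{Diffusion part.} For the $v$ and $w$ terms we have $\frac{r_1}{r_2}d_2v_{xx}=\partial_x\bigl(\frac{r_1d_2}{r_2}v_x\bigr)$, and similarly for $w$, so both are pure divergences. For the $u$ term we use
\[
\Bigl(1-\tfrac1u\Bigr)d_1u_{xx}=\partial_x\Bigl[d_1\Bigl(1-\tfrac1u\Bigr)u_x\Bigr]-d_1\,\partial_x\Bigl(1-\tfrac1u\Bigr)u_x,
\qquad
\partial_x\Bigl(1-\tfrac1u\Bigr)=\frac{u_x}{u^2}.
\]
The remainder is therefore $-d_1u_x^2/u^2=-d_1(u_x/u)^2$. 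Collecting the three divergence terms gives exactly $\partial_xJ$ with $J$ as in \eqref{eq:J}. Adding the reaction part then yields \eqref{eq:localentropy}.

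\textbf{Expected difficulty.} There is no real obstacle beyond regularity. One needs $u$, $v$, $w$ to be $C^{1,2}$ for $t>0$, which follows from standard parabolic regularity for bounded solutions. One also needs $u>0$ for $t>0$, which the strong maximum principle gives because $u_0\not\equiv0$. The only step that needs care is verifying that the reaction terms collapse to $-r_1\zetaD^2$, and that depends precisely on the choice of the coefficients $r_1/r_i$ in \eqref{eq:Phi}.
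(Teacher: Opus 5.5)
Your proposal is correct and follows the paper's proof step for step. You use the same three moves: the chain rule $\partial_t(u-1-\log u)=(1-\frac1u)u_t$, the splitting $(1-\frac1u)d_1u_{xx}=\partial_x[d_1(1-\frac1u)u_x]-d_1u_x^2/u^2$, and the collapse of the weighted reaction terms to $r_1(u-1+v+w)\zeta=-r_1\zeta^2$. Your remark that the identity is \eqref{eq:HidentityPDE} with $q=e_1$ multiplied by $r_1$ (since $r_1\mathcal H_{e_1}=\Phi+1$) is also correct and is a useful consistency check.
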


\begin{proof}
The $u$ part satisfies
\[
\partial_t(u-1-\log u)=\left(1-\frac1u\right)u_t.
\]
For diffusion,
\[
\left(1-\frac1u\right)d_1u_{xx}
=\partial_x\left[d_1\left(1-\frac1u\right)u_x\right]
-d_1\frac{u_x^2}{u^2}.
\]
For the reaction term,
\[
\left(1-\frac1u\right)r_1u\zetaD=r_1(u-1)\zetaD.
\]
The weighted $v$ and $w$ equations contribute $r_1v\zetaD$ and $r_1w\zetaD$ to the reaction.  Hence the total reaction term is
\[
r_1(u-1+v+w)\zetaD=-r_1\zetaD^2.
\]
The remaining second derivatives combine into \eqref{eq:J}.
\end{proof}

\begin{remark}[Why the logarithm is indispensable]
Without $-\log u$, the reaction contribution would be $r_1(u+v+w)(1-u-v-w)$, which changes sign across the critical simplex.  The logarithm contributes exactly the missing $-r_1(1-u-v-w)$ term, turning the reaction into the strictly dissipative square $-r_1\zetaD^2$.  It also creates the Fisher-information-type term $-d_1|\partial_x\log u|^2$.
\end{remark}

\subsection{Cone entropy, mass control, and defect dissipation}

For $c$ as in \eqref{eq:intermediatec}, set
\[
I_c(t)=(-ct,ct),\qquad
E_c(t)=\int_{-ct}^{ct}\Phi(t,x)\dd x.
\]
Differentiation of a moving integral and Proposition~\ref{prop:localentropy} give
\begin{align}
E_c'(t)&+d_1\int_{-ct}^{ct}\left(\frac{u_x}{u}\right)^2\dd x
+r_1\int_{-ct}^{ct}\zetaD^2\dd x\notag\\
&=J(t,ct)-J(t,-ct)+c\Phi(t,ct)+c\Phi(t,-ct).
\label{eq:coneidentity}
\end{align}
Lemma~\ref{lem:ray}, together with $s-1-\log s=O((s-1)^2)$ near $s=1$, yields
\begin{equation}
\label{eq:boundaryflux}
\abs{J(t,ct)-J(t,-ct)+c\Phi(t,ct)+c\Phi(t,-ct)}\le Ce^{-\kappa t}.
\end{equation}
Therefore:

\begin{proposition}[Uniform cone entropy]
\label{prop:coneentropy}
There is $C>0$ such that for all large $t$,
\begin{equation}
\label{eq:Ebound}
E_c(t)\le C,
\end{equation}
and
\begin{equation}
\label{eq:zetaintegrable}
\int_T^\infty\int_{|x|\le cs}\zetaD(s,x)^2\dd x\dd s<\infty.
\end{equation}
Moreover,
\begin{equation}
\label{eq:localmass}
\sup_{t\ge T}\int_{|x|\le ct}v(t,x)\dd x<\infty,
\qquad
\sup_{t\ge T}\int_{|x|\le ct}w(t,x)\dd x<\infty.
\end{equation}
\end{proposition}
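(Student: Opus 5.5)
The plan is to integrate the cone identity \eqref{eq:coneidentity} in time from a fixed large $T$ to $t$, and to control the boundary terms with \eqref{eq:boundaryflux}. Fix $T$ at least as large as the time provided by Lemma~\ref{lem:ray}. Since the two dissipation integrals on the left of \eqref{eq:coneidentity} are nonnegative, \eqref{eq:boundaryflux} gives, for all $t\ge T$,
\[
E_c(t)+\int_T^t\!\!\int_{-cs}^{cs}\Bigl[d_1\Bigl(\frac{u_x}{u}\Bigr)^2+r_1\zetaD^2\Bigr]\dd x\dd s
\le E_c(T)+\int_T^\infty Ce^{-\kappa s}\dd s=E_c(T)+\frac{C}{\kappa}e^{-\kappa T}.
\]
The right-hand side is a finite constant independent of $t$, provided $E_c(T)<\infty$.

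Finiteness of $E_c(T)$ reduces to the positivity of $u$. By \eqref{eq:bounds} and the strong maximum principle, $u(T,\cdot)$ is continuous and strictly positive on the compact interval $[-cT,cT]$, so it has a positive minimum there. Hence $-\log u(T,\cdot)$ is bounded on that interval. Since $v,w\le1$, this shows $\Phi(T,\cdot)$ is bounded on $[-cT,cT]$, and therefore $E_c(T)<\infty$.

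The three conclusions then follow. Dropping the dissipation term, and using that $E_c$ is continuous on $[T,\infty)$, gives \eqref{eq:Ebound}. Dropping instead $E_c(t)\ge0$ (which holds by \eqref{eq:Phinonneg}) and letting $t\to\infty$ gives \eqref{eq:zetaintegrable} by monotone convergence. Finally, \eqref{eq:Phinonneg} gives pointwise
\[
\frac{r_1}{r_2}v+\frac{r_1}{r_3}w\le\Phi .
\]
Integrating over $|x|\le ct$ yields
\[
\int_{|x|\le ct}v\dd x\le \frac{r_2}{r_1}E_c(t)\le \frac{r_2}{r_1}C,
\]
and the analogous bound for $w$. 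This is \eqref{eq:localmass}.

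The main point needing care is the justification of \eqref{eq:coneidentity} and \eqref{eq:boundaryflux}, which the proof takes as given from the text preceding the proposition. For \eqref{eq:coneidentity}, the Leibniz rule for the moving interval contributes $c\Phi(t,ct)+c\Phi(t,-ct)$. The local identity of Proposition~\ref{prop:localentropy} needs $u>0$ and classical regularity for $t>0$, and both are available. For \eqref{eq:boundaryflux}, first note that $\Phi(t,\pm ct)=O(e^{-\kappa t})$: indeed $u-1-\log u=O((1-u)^2)$, and the terms in $v,w$ are linear in quantities that are $O(e^{-\kappa t})$ by Lemma~\ref{lem:ray}. Second, $J(t,\pm ct)=O(e^{-\kappa t})$. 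This is because $1-1/u$ is bounded once $u\ge1/2$, which holds on the rays for large $t$, and because $u_x,v_x,w_x$ are exponentially small there, again by Lemma~\ref{lem:ray}. These two estimates are exactly what is needed to close the argument.
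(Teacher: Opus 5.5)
Your proposal is correct and follows the paper's proof: you integrate \eqref{eq:coneidentity} from $T$ to $t$, bound the boundary terms by the integrable $Ce^{-\kappa t}$ from \eqref{eq:boundaryflux}, and use $\Phi\ge\frac{r_1}{r_2}v+\frac{r_1}{r_3}w\ge0$ to obtain all three conclusions. You also verify that $E_c(T)<\infty$ by using positivity of $u(T,\cdot)$ on the compact interval $[-cT,cT]$, which the paper leaves implicit.
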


\begin{proof}
Integrate \eqref{eq:coneidentity} from $T$ to $t$, use \eqref{eq:boundaryflux} and the nonnegativity \eqref{eq:Phinonneg}.  The mass bounds follow directly from
\[
\frac{r_1}{r_2}\int_{|x|\le ct}v\le E_c(t),
\qquad
\frac{r_1}{r_3}\int_{|x|\le ct}w\le E_c(t).
\]
\end{proof}

Because $c>\cV,\cW$, \eqref{eq:gausstail} also gives
\begin{equation}
\label{eq:tailmass}
\int_{|x|>ct}\bigl(v+w+v^2+w^2\bigr)(t,x)\dd x\le Ce^{-\kappa t}.
\end{equation}
Combining \eqref{eq:localmass} and \eqref{eq:tailmass}, define the time-independent constants
\begin{equation}
\label{eq:Mvw}
M_v:=\sup_{t\ge T}\norm{v(t)}_{L^1(\R)}<\infty,
\qquad
M_w:=\sup_{t\ge T}\norm{w(t)}_{L^1(\R)}<\infty.
\end{equation}
These uniform mass bounds are the point at which the constants in the later Gagliardo--Nirenberg and Nash estimates become independent of time.

\section{Fastest-species selection and the full regional limit}
\label{sec:selection}

\subsection{$L^2$ extinction by Gagliardo--Nirenberg and Nash inequalities}

Define
\begin{equation}
\label{eq:gdef}
g(t):=\int_{|x|\le ct}\zetaD(t,x)^2\dd x+e^{-\kappa t}.
\end{equation}
Then $g\in L^1(T,\infty)$ by Proposition~\ref{prop:coneentropy} and \eqref{eq:tailmass}.

Multiplying the $v$ equation by $v$ and integrating on $\R$ gives
\begin{equation}
\label{eq:venergy}
\frac12\frac{d}{dt}\norm{v}_2^2
=-d_2\norm{v_x}_2^2+r_2\int_\R v^2\zetaD\dd x.
\end{equation}
On the cone, Holder and the \emph{whole-line} one-dimensional Gagliardo--Nirenberg inequality give
\begin{align}
\abs{\int_{|x|\le ct}v^2\zetaD\dd x}
&\le\norm{\zetaD}_{L^2(|x|\le ct)}\norm{v}_{L^4(|x|\le ct)}^2\notag\\
&\le\norm{\zetaD}_{L^2(|x|\le ct)}\norm{v}_{L^4(\R)}^2\notag\\
&\le C_{\rm GN}^2 M_v\norm{\zetaD}_{L^2(|x|\le ct)}\norm{v_x}_2\notag\\
&\le\frac{d_2}{2r_2}\norm{v_x}_2^2+C\norm{\zetaD}_{L^2(|x|\le ct)}^2.
\label{eq:GNbound}
\end{align}
Here $C_{\rm GN}$ is the fixed whole-line constant and $M_v$ is the uniform constant in \eqref{eq:Mvw}; hence the constant $C$ in \eqref{eq:GNbound} is independent of $t$.  Outside the cone, $|\zetaD|\le2$ and \eqref{eq:tailmass} imply an $O(e^{-\kappa t})$ contribution.  Thus, with $Y_v(t)=\norm{v(t)}_2^2$,
\begin{equation}
\label{eq:Yv}
Y_v'(t)+d_2\norm{v_x(t)}_2^2\le Cg(t).
\end{equation}

The one-dimensional Nash inequality reads
\begin{equation}
\label{eq:Nash}
\norm{f}_2^6\le C_N\norm{f_x}_2^2\norm{f}_1^4,
\qquad f\in H^1(\R)\cap L^1(\R).
\end{equation}
Therefore \eqref{eq:Mvw} implies
\[
\norm{v_x}_2^2\ge\frac{1}{C_NM_v^4}Y_v^3.
\]
Hence
\begin{equation}
\label{eq:ODEv}
Y_v'+a_vY_v^3\le Cg(t),\qquad a_v>0,
\end{equation}
with all constants independent of $t$.

\begin{lemma}[ODE closure]
\label{lem:ODE}
If $Y\ge0$ is locally absolutely continuous and
\[
Y'+aY^3\le h(t),\qquad a>0,\qquad h\in L^1(T,\infty),\ h\ge0,
\]
then $Y(t)\to0$.
\end{lemma}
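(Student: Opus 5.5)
The plan is to combine a tail estimate for $h$ with a ``once small, stays small'' mechanism coming from the damping term $aY^3$.

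\textbf{Step 1: smallness of the forcing tail.} Fix $\eps>0$. Since $h\in L^1(T,\infty)$ and $h\ge0$, choose $T_1\ge T$ with
\[
\int_{T_1}^\infty h(s)\dd s\le\eps .
\]

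\textbf{Step 2: $Y$ enters $[0,2\eps]$.} I claim there is $t_0\ge T_1$ with $Y(t_0)\le\eps$. Suppose instead that $Y>\eps$ on all of $[T_1,\infty)$. Integrating the inequality from $T_1$ to $t$ gives
\[
Y(t)\le Y(T_1)+\eps-a\eps^3(t-T_1).
\]
The right-hand side becomes negative for large $t$, which contradicts $Y\ge0$. So $Y$ drops below $\eps$ at some time $t_0\ge T_1$.

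\textbf{Step 3: $Y$ stays small after $t_0$.} Since $aY^3\ge0$, the inequality gives $Y'\le h$ a.e. By absolute continuity, for every $t\ge t_0$,
\[
Y(t)\le Y(t_0)+\int_{t_0}^t h\le\eps+\eps=2\eps .
\]
Hence $\limsup_{t\to\infty} Y(t)\le2\eps$. Because $\eps>0$ was arbitrary, $Y(t)\to0$.

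The only delicate point is Step 2: we need $Y$ to become small at some time before the tail bound can take over. The cubic damping provides this, because while $Y>\eps$ it forces a decrease at least linear in time, which the integrable forcing cannot compensate. Absolute continuity is what justifies integrating the differential inequality in Steps 2 and 3. Applied to \eqref{eq:ODEv} with $h=Cg$ (and likewise for $w$), this yields $\norm{v(t)}_2+\norm{w(t)}_2\to0$.
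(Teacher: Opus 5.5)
Your proof is correct and is essentially the paper's argument. The paper gets times $t_n\to\infty$ with $Y(t_n)\to0$ from $\int_T^\infty Y^3<\infty$, which your Step 2 proves directly by contradiction, and then uses $Y'\le h$ with the vanishing tail of $h$ exactly as in your Step 3 (the Step 2 heading should read $[0,\eps]$ rather than $[0,2\eps]$, but this is cosmetic).
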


\begin{proof}
Integrating gives $\int_T^\infty Y^3<\infty$, so there is $t_n\to\infty$ with $Y(t_n)\to0$.  Since $Y'\le h$, for $t\ge t_n$,
\[
Y(t)\le Y(t_n)+\int_{t_n}^t h(s)\dd s
\le Y(t_n)+\int_{t_n}^\infty h(s)\dd s.
\]
Let $n\to\infty$.
\end{proof}

Applying Lemma~\ref{lem:ODE} to \eqref{eq:ODEv} gives
\begin{equation}
\label{eq:vL2}
\norm{v(t)}_2\to0.
\end{equation}
The same argument gives
\begin{equation}
\label{eq:wL2}
\norm{w(t)}_2\to0.
\end{equation}

\subsection{Uniform extinction of the slower species}

Since $\zetaD\le1$,
\[
v_t\le d_2v_{xx}+r_2v.
\]
For $t\ge T+1$, positivity gives
\begin{equation}
\label{eq:semigroupv}
0\le v(t)\le e^{r_2}e^{d_2\partial_{xx}}v(t-1).
\end{equation}
The standard one-dimensional heat-semigroup estimate
\begin{equation}
\label{eq:L2Linf}
\norm{e^{d\tau\partial_{xx}}f}_{\infty}
\le C(d\tau)^{-1/4}\norm{f}_2
\end{equation}
therefore yields
\[
\norm{v(t)}_\infty\le C(d_2,r_2)\norm{v(t-1)}_2\to0.
\]
Likewise,
\begin{equation}
\label{eq:slowuniform}
\norm{v(t)}_\infty+\norm{w(t)}_\infty\to0.
\end{equation}

\subsection{Recovery of $u$ behind its front}

Fix $0<c<\cU$.  By \eqref{eq:slowuniform}, for every $\eps>0$ there is $T_\eps$ such that
\[
v(t,x)+w(t,x)\le\eps\qquad(t\ge T_\eps,\ x\in\R).
\]
Hence
\begin{equation}
\label{eq:ulower}
u_t\ge d_1u_{xx}+r_1u(1-\eps-u).
\end{equation}
Choose $\eps$ so small that
\[
c<2\sqrt{d_1r_1(1-\eps)}.
\]
Since $u(T_\eps,x)>0$ for all $x$, choose a nonzero compactly supported $0\le\varphi\le u(T_\eps,\cdot)$ with $\varphi\le1-\eps$, and let $q$ solve
\[
q_t=d_1q_{xx}+r_1q(1-\eps-q),\qquad q(T_\eps)=\varphi.
\]
Comparison gives $q\le u$, and scalar KPP spreading gives $q\to1-\eps$ uniformly on $|x|\le ct$.  Since also $u\le1$, letting $\eps\downarrow0$ yields
\begin{equation}
\label{eq:uinside}
\sup_{|x|\le ct}\abs{u(t,x)-1}\to0
\qquad\text{for every }c<\cU.
\end{equation}
The scalar upper comparison \eqref{eq:scalarupper} also gives
\begin{equation}
\label{eq:uoutside}
\sup_{|x|\ge ct}u(t,x)\to0
\qquad\text{for every }c>\cU.
\end{equation}

We have proved the main qualitative theorem.

\begin{theorem}[Fastest-species selection]
\label{thm:selection}
Assume \eqref{eq:IC} and $\cU>\max\{\cV,\cW\}$.  Then
\begin{equation}
\label{eq:selection1}
\norm{v(t)}_\infty+\norm{w(t)}_\infty\to0.
\end{equation}
Moreover,
\begin{equation}
\label{eq:selection2}
\sup_{|x|\le ct}\abs{u(t,x)-1}\to0\quad\text{for every }c<\cU,
\end{equation}
and
\begin{equation}
\label{eq:selection3}
\sup_{|x|\ge ct}u(t,x)\to0\quad\text{for every }c>\cU.
\end{equation}
Thus $u$ is the unique surviving species and its qualitative spreading speed is $\cU$.
\end{theorem}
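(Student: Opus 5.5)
The theorem collects the three conclusions established in Section~\ref{sec:selection}, so the plan is to chain the results already proved there in order.

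\textbf{Step 1: uniform bounds on the slower species.} Fix an intermediate speed $c_s<c<\cU$ as in \eqref{eq:intermediatec}. Lemma~\ref{lem:ray} gives exponential control of the boundary fluxes at $x=\pm ct$. This is \eqref{eq:boundaryflux}. Integrating the cone identity \eqref{eq:coneidentity} in time then yields Proposition~\ref{prop:coneentropy}: a uniform bound on $E_c$, time-integrability of $\int_{|x|\le ct}\zetaD^2$, and uniform local masses of $v,w$. Adding the Gaussian tail \eqref{eq:tailmass} outside the cone gives the time-independent whole-line masses $M_v,M_w$ in \eqref{eq:Mvw}.

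\textbf{Step 2: $L^2$ decay.} Test the $v$ equation with $v$ and split the reaction integral into the cone and its complement. On the cone, use H\"older and Gagliardo--Nirenberg with $M_v$, as in \eqref{eq:GNbound}; off the cone the contribution is $O(e^{-\kappa t})$. This gives \eqref{eq:Yv}. The Nash inequality \eqref{eq:Nash}, again with $M_v$, turns the dissipation into $a_vY_v^3$. Lemma~\ref{lem:ODE} then gives $\norm{v}_2\to0$, and the same argument gives $\norm{w}_2\to0$.

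\textbf{Step 3: uniform decay.} Since $\zetaD\le1$, we have the linear supersolution bound \eqref{eq:semigroupv} over a unit time step. The $L^2\to L^\infty$ heat-smoothing estimate \eqref{eq:L2Linf} then upgrades Step 2 to \eqref{eq:selection1}.

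\textbf{Step 4: the front and interior of $u$.} Given $\eps>0$ and $c<\cU$, wait until $v+w\le\eps$ everywhere. Then $u$ is a supersolution of the KPP equation \eqref{eq:ulower} with carrying capacity $1-\eps$ and speed $2\sqrt{d_1r_1(1-\eps)}>c$. Place a compactly supported subsolution below $u(T_\eps)$, which is possible since $u>0$. Scalar spreading, together with $u\le1$, then gives $\limsup_t\sup_{|x|\le ct}|u-1|\le\eps$. Letting $\eps\to0$ yields \eqref{eq:selection2}. Finally, \eqref{eq:selection3} follows from the scalar upper comparison \eqref{eq:scalarupper} and the Aronson--Weinberger spreading theorem.

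\textbf{Main obstacle.} The delicate ingredient is Lemma~\ref{lem:ray}. The entropy closure needs exponential smallness of $1-u$ and of the derivatives exactly on the moving rays $x=\pm ct$. Only then is the boundary flux integrable, since $\Phi$ contains $-\log u$ and $J$ contains $u_x/u$. I would obtain this in three moves:
\begin{enumerate}
\item Build a KPP subsolution for $u$ on a slightly wider annulus, using the exponentially small forcing from $v,w$, to show $u\to1$ there.
\item Compare $p=1-u$ with a damped linear equation on a moving strip to obtain an exponential rate.
\item Pass this rate to $u_x,v_x,w_x$ by interior parabolic estimates.
\end{enumerate}

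Everything after this lemma follows the ODE and interpolation steps above with time-independent constants.
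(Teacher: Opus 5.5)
Your proposal is correct and follows the paper's proof essentially step for step. The chain is the same: exponential ray control, the cone entropy bound giving time-independent $L^1$ masses $M_v,M_w$, Gagliardo--Nirenberg plus Nash to close the $L^2$ differential inequality via Lemma~\ref{lem:ODE}, heat smoothing for $L^\infty$ decay, and scalar KPP comparison for $u$; your three-move sketch of Lemma~\ref{lem:ray} is also exactly the route the paper indicates.
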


\subsection{Regional interpretation under $\cU>\cV>\cW$}

Theorem~\ref{thm:selection} collapses the three putative inner regions into one asymptotic state.

\begin{corollary}[Complete regional limit away from the fastest transition front]
\label{cor:regions}
Assume $\cU>\cV>\cW$.  Let $0<a<b<\cU$.  Then
\[
\sup_{at\le|x|\le bt}\Bigl(\abs{u(t,x)-1}+v(t,x)+w(t,x)\Bigr)\to0.
\]
In particular this holds in each of the three zones
\[
\cV t<|x|<\cU t,\qquad
\cW t<|x|<\cV t,\qquad
|x|<\cW t,
\]
provided one stays a fixed positive distance in speed from the indicated endpoints.  For $c>\cU$, all three components tend to zero uniformly on $|x|\ge ct$.
\end{corollary}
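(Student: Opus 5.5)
The corollary is a direct consequence of Theorem~\ref{thm:selection}, so the plan is to read it off componentwise. Fix $0<a<b<\cU$.

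\textbf{Step 1: the $v,w$ part.} By \eqref{eq:selection1}, $\norm{v(t)}_\infty+\norm{w(t)}_\infty\to0$ uniformly on all of $\R$. In particular the supremum of $v+w$ over the annulus $at\le|x|\le bt$ tends to zero. Note that no lower bound $a>0$ is needed here.

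\textbf{Step 2: the $u$ part.} Since $at\le |x|\le bt$ implies $|x|\le bt$, we have
\[
\sup_{at\le|x|\le bt}\abs{u(t,x)-1}\le\sup_{|x|\le bt}\abs{u(t,x)-1}.
\]
The right-hand side tends to zero by \eqref{eq:selection2} applied with $c=b<\cU$. Adding the estimates from Steps 1 and 2 gives the main claim.

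\textbf{Step 3: the three zones.} Each of the zones
\[
\cV t<|x|<\cU t,\qquad \cW t<|x|<\cV t,\qquad |x|<\cW t,
\]
once shrunk by a fixed positive margin in speed, is contained in a set $at\le|x|\le bt$ with $b<\cU$. For the innermost zone, the lower endpoint is $0$. This is harmless: Step 2 only used $|x|\le bt$ and Step 1 is global, so the argument works verbatim with $a=0$.

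\textbf{Step 4: the outer region.} For $c>\cU$, we have $\sup_{|x|\ge ct}u\to0$ by \eqref{eq:selection3}. The components $v$ and $w$ vanish uniformly on all of $\R$ by \eqref{eq:selection1}. Alternatively, since $c>\cU>\cV>\cW$, the Gaussian tail \eqref{eq:gausstail} also gives this decay.

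\textbf{Main obstacle.} There is essentially none; the only point to watch is the interpretation of the innermost zone, where the lower endpoint is zero rather than a positive speed. As noted in Step 3, this is handled by taking $a=0$.
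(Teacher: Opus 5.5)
Your proposal is correct and follows the paper, which gives no separate proof and reads the corollary off Theorem~\ref{thm:selection}. As in your Steps 1--4, the inputs are the global decay \eqref{eq:selection1} for $v,w$, then \eqref{eq:selection2} with $c=b$ for $u$ on the annulus (and, via $a=0$, the inner zones), then \eqref{eq:selection3} for $u$ beyond speed $\cU$.
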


\begin{remark}
The theorem is qualitative at the fastest front.  It does not by itself prove that the $u$ front has the precise minimal KPP profile or the Bramson logarithmic correction.  Those sharper front-location questions should be kept distinct from the winner-selection argument above.
\end{remark}


\begin{thebibliography}{99}

\bibitem{AlfaroXiao2023}
M.~Alfaro and D.~Xiao,
\newblock \emph{Lotka--Volterra competition-diffusion system: the critical competition case},
\newblock Communications in Partial Differential Equations \textbf{48} (2023), no.~2, 182--208.
\newblock DOI: 10.1080/03605302.2023.2169936.

\bibitem{AronsonWeinberger1978}
D.~G.~Aronson and H.~F.~Weinberger,
\newblock \emph{Multidimensional nonlinear diffusion arising in population genetics},
\newblock Advances in Mathematics \textbf{30} (1978), 33--76.
\newblock DOI: 10.1016/0001-8708(78)90130-5.

\bibitem{Bramson1983}
M.~Bramson,
\newblock \emph{Convergence of solutions of the Kolmogorov equation to travelling waves},
\newblock Memoirs of the American Mathematical Society \textbf{44} (1983), no.~285.
\newblock DOI: 10.1090/memo/0285.

\bibitem{CC}
{C.~Carrère}, 
Spreading speeds for a two species compettition-diffusion system, European J. Appl. Math., \textbf{264} (2015), 521--534.

\bibitem{Fisher}
{R. A. Fisher},
The wave of advance of advantageous genes,
Ann. Eugen., 7 (1937), 335--369.

\bibitem{Girardin Nadin}
{L.~Girardin and G.~Nadin}, 
Travelling waves for diffusive and strongly competitive systems: relative motility and invasion speed, European J. Appl. Math., \textbf{26} (2015), 521--534.
\newblock DOI:10.1017/s0956792515000170.

\bibitem{Girardin}
{L. Girardin}, 
The effect of random dispersal on competitive exclusion-a review, Mathematical Biosciences, 318 (2019), 108271.

\bibitem{Girardin Lam}
{L. Girardin, K.-Y. Lam}, 
Invasion of an empty habitat by two competitors: spreading properties of monostable
two-species competition-diffusion systems, Proc. Lond. Math. Soc., 119 (2019), 1279--1335.

\bibitem{Guo Lin}
{J.~S.~Guo and Y.~C.~Lin}, The sign of the wave speed for the Lotka-Volterra competition-diffusion system, Comm. Pure Appl. Anal., \textbf{12} (2013), 2083--2090.
\newblock DOI:10.3934/cpaa.2013.12.2083.

\bibitem{Guo2019}
J.~S.~Guo, C.~H.~Wu
\newblock \emph{Entire solutions originating from traveling fronts for a two-species competition-diffusion system},
\newblock Nonlinearity \textbf{32} (2019), 3234–3268.
\newblock DOI: 10.1088/1361-6544/ab1b83.

\bibitem{KPP}
{A. N. Kolmogorov, I. G. Petrovskii and N. S. Piskunov},
A study of the equation of diffusion with increase in the quantity of matter, and its application to a biological problem,
Bull. Moscow State Univ. Ser. A: Math. and Mech., 1 (1937), 1--25.

\bibitem{Kaneko Matsuzawa}
{Y. Kaneko and H. Matsuzawa}, Spreading speed and sharp asymptotic profiles of solutions in free boundary problems for nonlinear advection-diffusion equation,
J. Math. Anal. Appl, 428 (2015), 43--76.

\bibitem{Lin Li}
{G. Lin, W.-T. Li}, Asymptotic spreading of competition diffusion systems: the role of interspecific competitions. European J. Appl. Math., \textbf{23} (2012), 669--689.
\newblock DOI: 10.1017/S0956792512000198.

\bibitem{Lam2020}
K.~Y.~Lam, R.~B.~Salako, Q.~Wu
\newblock \emph{Entire solutions of diffusive Lotka-Volterra system},
\newblock J. Differ. Equ \textbf{269} (2020), 10758–107918.
\newblock DOI: 10.1016/j.jde.2020.07.006.

\bibitem{morita2009}
Y.~Morita, K.~Tachibana
\newblock \emph{An entire solution to the Lotka–Volterra competition-diffusion equations},
\newblock SIAM J. Math. Anal \textbf{40} (2009), 2217–2240.
\newblock DOI: 10.1137/080723715.

\bibitem{Rodrigo}
{M. Rodrigo, M. Mimura},
Exact solutions of a competition-diffusion system, Hiroshima Math. J., \textbf{30} (2000), 257--270.

\bibitem{Peng Wu Zhou}
{R. Peng, C. Wu and M. Zhou},
Sharp estimates for the spreading speeds of the Lotka-Volterra diffusion system with strong competition, Ann. Inst. H. Poincar\'{e} Anal. Non Lin\'{e}aire, 38 (2021), 507--547.
\end{thebibliography}
\end{document}